
\documentclass[12pt, leqno]{article}
\oddsidemargin 0in
\topmargin -0.5in
\textwidth 16.5truecm
\textheight 23truecm
\pagestyle{empty}
\parindent=45pt

\usepackage{amsmath,latexsym,amssymb,amsfonts}
\usepackage{enumerate}
\usepackage{amsthm}

\numberwithin{equation}{section} \topmargin 0cm

\parindent1em
\sloppy
\textwidth=16cm \textheight=24cm \oddsidemargin=0cm
\evensidemargin=0cm
\newtheorem{theo}{Theorem}[section]
\newtheorem{lemma}[theo]{Lemma}
\newtheorem{prop}[theo]{Proposition}
\newtheorem{cor}[theo]{Corollary}
\newtheorem{defi}[theo]{Definition}
\theoremstyle{definition}
\newtheorem{rem}[theo]{Remark}
\newtheorem{exa}[theo]{Example}

\newcommand{\ds}{\displaystyle}
\newcommand{\Lp}{L^p ([-1,0],X)}
\newcommand{\xf}{\bigl(\begin{smallmatrix}
                           x\\
                           f
                       \end{smallmatrix}\bigr)}
\newcommand{\Wp}{W^{1,p}([-1,0],X)}
\newcommand{\sg}{$(T(t))_{t\geq 0}$ }
\newcommand{\sgs}{$(S(t))_{t\geq 0}$ }

\newcommand{\SG}{$(\cT(t))_{t\geq 0}$ }

\def\RR{{\mathbb{R}}}
\def\CC{{\mathbb{C}}}
\def\NN{{\mathbb{N}}}

\def\cL{{\mathcal{L}}}

\def\cT{{\mathcal{T}}}

\def\cA{{\mathcal{A}}}
\def\cE{{\mathcal{E}}}

\def\sg{\(\displaystyle{(T(t))_{t\geq 0}}\) }

\title
{\normalsize\bf \vskip 2truecm HYPERBOLICITY OF LINEAR PARTIAL
DIFFERENTIAL EQUATIONS WITH DELAY \footnote{TO APPEAR IN INTEGRAL
EQUATIONS AND OPERATOR THEORY} }
\author
{\normalsize ANDR\'AS B\'ATKAI \thanks{The author thanks  W. Desch
(Graz), I. Gy\H ori (Veszpr\'em) and R. Schnaubelt (Halle) for
helpful discussions.} }

\vskip 1truecm

\date{}
\begin{document}
\maketitle
\thispagestyle{empty}

\baselineskip=12pt
\begin{quote}
Robust hyperbolicity and stability results for linear partial
differential equations with delay will be given and, as an
application,  the effect of small delays to the asymptotic
properties of feedback systems will be analyzed.
\end{quote}

\vskip 1truecm
\baselineskip=15pt
\section{Introduction}

Partial differential equations with delay have been studied for
many years and by many different methods. In an abstract way and
using the standard notation (see  \cite{wu}), they can be written
as
\begin{equation*}
\begin{cases}
        u'(t)=Bu(t)+\Phi u_t ,&t\geq 0,\\
        u(0)=x, & \\
        u_0 =f,&
\end{cases} \tag{DE}
\end{equation*}
in a Banach space $X$, where $(B,D(B))$ is a (unbounded) linear
operator on $X$ and the delay operator $\Phi$ is supposed to
belong to, e.g., $\cL (W^{1,p}([-1,0],X) ,X)$ for some $1\leq
p\leq \infty$. J. Hale \cite{hale},  G. Webb \cite{webb}, N. Krasovski and
others were
 the first to apply semigroup theory to delay equations, and
we refer to  \cite{wu} for more recent references on partial
differential equations with delay.

As a first step one has to choose an appropriate state space. One
of the possibilities is to work in the space of continuous
$X$-valued functions. In this case, the relationship between
solutions of (DE) and a corresponding semigroup has been studied
intensively (see for example  \cite{ha-lu1}, \cite{wu} or
\cite[Section VI.6]{en-na}) and is well understood. On the other
hand, the state space $\cE :=X\times L^p([-1,0],X)$  turns out to
be a very good choice with regards to certain applications (e.g., to
control theory, see \cite{nak}, to numerical methods, see
\cite{kappel}), because we can use the reflexivity or the Hilbert space
structure of the state space. This approach will be used in this paper.

The aim of this work is to  give robust hyperbolicity and
stability results for linear partial differential equations with
delay, especially for the cases where no spectral mapping theorems
are available and we cannot use the powerful technics of
characteristic equations. As an application, we analyze the effect
of small delays to the asymptotic properties of feedback systems.

In the next section we collect some results on the semigroup
approach for delay equations in the $L^p$ history space, mainly
from \cite{ba-pi1}. This approach is especially useful in the
Hilbert space case because the theorem of  Gearhart is available and allows stability results in the case where
the semigroup generated by $(B,D(B))$ is not compact, see
\cite{ba-pi2} for applications.

In Section 3, we present robust hyperbolicity results in the
Hilbert space context  of the following kind. Assume, that
$(B,D(B))$ generates a hyperbolic semigroup and that the delay
operator $\Phi$ is "small" in some sense, which will be explaind in the text
later on. Then the delay semigroup
remains hyperbolic. As a special case we consider uniform
exponential stability.

In the last section we investigate the important question of the
effect of small delays. The problem is the following: We consider
delays of the special form $\Phi:=C\delta_{-\tau}$. The question
is, knowing that the solutions of the system are exponentially
stable for $\tau=0$, whether it follows that they remain stable
for arbitrary small $\tau>0$. This question is motivated by feedback-systems and
control theory and we give more references on this question in the
text. First two examples are given to show how the stability can
be destroyed and then  a general approach to treat this question
is provided. The problem  is considered for norm continuous
semigroups and for commuting compact perturbations.

\section{The semigroup approach to delay equations}

Let us summarize here some results from \cite{ba-pi1} on the
semigroup approach to linear partial differential equations with
delay.

Consider the equation
\begin{equation*}
\text{(DE)}\qquad
\begin{cases}
        u'(t)=Bu(t)+\Phi u_t ,&t\geq 0,\\
        u(0)=x, & \\
        u_0 =f,&
\end{cases}
\end{equation*}
where
\begin{itemize}
\item $x\in X$, $X$ is a Banach space,
\item $B:D(B)\subseteq X\longrightarrow X$ is a linear, closed, and
densely defined operator,
\item $f\in \Lp$, $p\geq 1$,
\item  $\Phi:W^{1,p}([-1,0],X)\longrightarrow X$ is a linear, bounded
operator,
\item $u:[-1,\infty )\longrightarrow X$ and $u_t
:[-1,0]\longrightarrow X$ is defined by $u_t (\sigma ):=u(t+\sigma
)$.
\end{itemize}
\begin{defi}
We say that a function $u:[-1,\infty )\longrightarrow X$ is a
(classical) solution of (DE) if
\begin{enumerate}[(i)]
\item $u\in C([-1,\infty ),X)\cap C^1 ([0,\infty ),X)$,
\item $u(t)\in D(B)$ and $u_t \in W^{1,p}([-1,0],X)$
for all $t\geq 0$, and
\item $u$ satisfies (DE) for all $t\geq 0$.
\end{enumerate}
\end{defi}
To be able to solve (DE) by semigroup methods, we introduce the
Banach space $$\cE :=X\times \Lp $$ with an arbitrary product
norm, usually the $p$-norm,  and the operator
\begin{equation}\label{delaymatrix}
\cA :=\begin{pmatrix}
             B&\Phi\\
             0&\frac{d}{d\sigma}
        \end{pmatrix}
\end{equation}
with domain
\begin{equation}\label{delaydomain}
D(\cA ):=\left\{\xf\in D(B)\times W^{1,p}([-1,0],X)\ :\
f(0)=x\right\}.
\end{equation}

Consider now the abstract Cauchy problem
\begin{equation*}
\text{(ACP)}\qquad
\begin{cases}
        v'(t)=\cA \,v(t) , &t\geq 0,\\
        v(0)=v_0 &
\end{cases}
\end{equation*}
associated to the operator matrix $(\cA ,D(\cA ))$ on the Banach
space $\cE$ with initial value $v_0:=\xf$. There is a natural
correspondence between the solutions of the two problems (see
\cite[Proposition 2.3 and 2.4]{ba-pi1}).
\begin{lemma}\label{cara}
\begin{enumerate}[(i)]
\item If $u$ is a solution of (DE), then $t\mapsto\bigl(\begin{smallmatrix}
                           u(t)\\
                           u_t
                       \end{smallmatrix}\bigr)$
 is a solution of the equation (ACP).
\item If $t\mapsto\bigl(\begin{smallmatrix}
                           u(t)\\
                           v(t)
                       \end{smallmatrix}\bigr)$  is
a solution of (ACP), then $v(t)=u_t$ for all $t\geq 0$ and $u$ is
a solution of (DE).
\end{enumerate}
\end{lemma}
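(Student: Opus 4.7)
The plan is to prove both directions by direct verification, relying on the key technical fact that for any continuous $u:[-1,\infty)\to X$ with $u_t\in \Wp$ for every $t\ge 0$, the history map $t\mapsto u_t$ is continuously differentiable as a map into $\Lp$ with derivative $\frac{d}{d\sigma}u_t$.

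For part (i), I would let $u$ solve (DE) and set $w(t):=\bigl(\begin{smallmatrix} u(t)\\ u_t\end{smallmatrix}\bigr)$. Then $w(t)\in D(\cA)$, since $u(t)\in D(B)$, $u_t\in \Wp$, and the compatibility $u_t(0)=u(t)$ is automatic from the definition $u_t(\sigma)=u(t+\sigma)$. Differentiating the first coordinate and invoking (DE) gives $u'(t)=Bu(t)+\Phi u_t$, while the technical fact applied to the second coordinate gives $\frac{d}{dt}u_t=\frac{d}{d\sigma}u_t$ in $\Lp$. Assembling the two rows yields $w'(t)=\cA w(t)$ with $w(0)=\xf$, which is (ACP).

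For part (ii), I would conversely suppose that $t\mapsto \bigl(\begin{smallmatrix} u(t)\\ v(t)\end{smallmatrix}\bigr)$ solves (ACP). Membership in $D(\cA)$ forces $v(t)(0)=u(t)$ for every $t\ge 0$, and the second coordinate equation reads $\frac{d}{dt}v(t)=\frac{d}{d\sigma}v(t)$ in $\Lp$. This is an inhomogeneous abstract Cauchy problem for the nilpotent left-shift generator on $\Lp$ with boundary value $u(t)$ prescribed at $\sigma=0$ and initial datum $v(0)=f$; the method of characteristics (equivalently, a direct computation using the shift semigroup and a Duhamel-type representation) yields the unique solution $v(t)(\sigma)=f(t+\sigma)$ for $t+\sigma\le 0$ and $v(t)(\sigma)=u(t+\sigma)$ for $t+\sigma\ge 0$, both branches coinciding with $u_t(\sigma)$. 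Substituting $v(t)=u_t$ back into the first coordinate of (ACP) then recovers $u'(t)=Bu(t)+\Phi u_t$, and the regularity of $u$ required by the definition of a solution of (DE) follows from the smoothness of $t\mapsto u(t)$ as the first coordinate of a classical solution of (ACP) together with the identity $u(t+\sigma)=v(t)(\sigma)$ on $[-1,0]$.

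The main obstacle is the rigorous identification of $\frac{d}{dt}u_t$ with $\frac{d}{d\sigma}u_t$ inside $\Lp$: one must verify that the difference quotient $\frac{u_{t+h}-u_t}{h}$ converges in $L^p$-norm to the $\sigma$-derivative, not merely pointwise. I would handle this by a density argument, establishing the identity first for $C^1$ histories via the fundamental theorem of calculus in $X$ and then passing to the $L^p$-limit using the uniform $W^{1,p}$-regularity built into the definition of a solution. The uniqueness step in part (ii) reduces to the same fact, read backwards, via the well-posedness of the shift semigroup on $\Lp$.
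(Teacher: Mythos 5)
The paper does not prove this lemma itself but only cites \cite{ba-pi1} (Propositions 2.3 and 2.4), where the argument is precisely the direct verification you outline: for (i) one differentiates the history map to get $\frac{d}{dt}u_t=\frac{d}{d\sigma}u_t$ in $\Lp$, and for (ii) one identifies $v(t)$ with $u_t$ by subtracting the candidate solution $t\mapsto u_t$ and invoking uniqueness of classical solutions for the nilpotent left-shift generator $A_0$, exactly as you sketch. Your proposal is correct and correctly isolates the one genuine technical point, namely the $L^p$-norm convergence of the difference quotients of $t\mapsto u_t$; note that for $1\leq p<\infty$ this follows directly because the concatenation of $f$ on $[-1,0]$ with the $C^1$ function $u$ on $[0,\infty)$ (they match at $0$ since $f(0)=x=u(0)$) lies in $W^{1,p}_{\mathrm{loc}}([-1,\infty),X)$ and difference quotients of $W^{1,p}$ functions converge in $L^p$ to the weak derivative, so your separate density argument can be replaced by this standard fact.
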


We can then give the following definition for  well-posedness.
\begin{defi}
We say that (DE) is {\it well-posed} if
\begin{enumerate}[(i)]
\item for every $\xf\in D(\cA )$ there is a unique solution $u(x,f,\cdot )$,
and
\item the solutions depend continuously on the initial values, i.e.,
if a sequence $\bigl(\begin{smallmatrix}
                                               x_n\\
                                               f_n
                                          \end{smallmatrix}\bigr)$
in $D(\cA )$ converges to $\xf\in D(\cA )$,  then $u(x_n ,f_n ,t)$
converges to $u(x,f,t)$ uniformly for $t$ in compact intervals.
\end{enumerate}
\end{defi}

There is also a correspondence between the well-posedness of
equation (DE) and of the abstract Cauchy problem (ACP), see also
\cite[Theorem 2.8]{ba-pi1}.
\begin{prop}\label{caratterizzazione}
Let $(\cA ,D(\cA ))$ be the operator matrix defined by
(\ref{delaymatrix}) and (\ref{delaydomain}). Then the following
assertions are equivalent.
\begin{enumerate}[(i)]
\item Equation (DE) is well-posed.
\item $(\cA ,D(\cA ))$ is the generator of a strongly continuous semigroup on $\cE$.
\end{enumerate}
\end{prop}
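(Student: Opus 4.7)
The plan is to use Lemma~\ref{cara} to translate between solutions of (DE) and of (ACP) and then to invoke the classical characterization: a closed, densely defined operator generates a $C_0$-semigroup if and only if its associated Cauchy problem is well-posed. Two preliminary facts about $(\cA,D(\cA))$ should therefore be verified first. Closedness follows from closedness of $B$, closedness of $d/d\sigma$ on $W^{1,p}([-1,0],X)$, and boundedness of $\Phi$. Density of $D(\cA)$ in $\cE$ is obtained by approximating an arbitrary $\xf$ by pairs in $D(B)\times W^{1,p}([-1,0],X)$ whose second coordinate matches the first at $0$; one mollifies $f$ and then corrects the boundary value with a small bump so as not to disturb the $L^p$-approximation.

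For the implication (ii) $\Rightarrow$ (i), assume that $(\cA,D(\cA))$ generates a semigroup $(\cT(t))_{t\geq 0}$. For any $\xf \in D(\cA)$, the orbit $v(t)=\cT(t)\xf$ is a classical solution of (ACP), so by Lemma~\ref{cara}(ii) its first coordinate solves (DE). Uniqueness for (DE) is inherited from uniqueness of classical solutions of (ACP) via Lemma~\ref{cara}(i), and continuous dependence of solutions on initial data in the required sense is immediate from strong continuity and local uniform boundedness of $(\cT(t))_{t\geq 0}$.

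For the converse (i) $\Rightarrow$ (ii), I would define $\cT(t)\xf := \bigl(\begin{smallmatrix} u(x,f,t)\\ u_t(x,f,\cdot)\end{smallmatrix}\bigr)$ on $D(\cA)$. Lemma~\ref{cara}(i) guarantees that $\cT(t)\xf \in D(\cA)$ and that $t\mapsto \cT(t)\xf$ is a classical solution of (ACP); the semigroup identity $\cT(t+s)=\cT(t)\cT(s)$ on $D(\cA)$ then follows from uniqueness of solutions of (DE) together with translation invariance. The main obstacle is extending $\cT(t)$ to a bounded operator on all of $\cE$: continuous dependence yields sequential continuity of the linear map $\xf\mapsto \cT(t)\xf$ for each fixed $t$, hence boundedness in the $\cE$-norm, and the uniform boundedness principle then upgrades this to local uniform boundedness in $t$. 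By density of $D(\cA)$ the family extends to a strongly continuous semigroup on $\cE$. Finally, to identify its generator with $\cA$, one differentiates the classical ACP solution at $t=0^+$ on $D(\cA)$ to see that the generator extends $\cA$; closedness of $\cA$ and $\cT(t)$-invariance of $D(\cA)$, checked on classical solutions, then force equality of the domains.
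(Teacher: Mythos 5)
The paper itself does not prove this proposition; it defers to \cite[Theorem 2.8]{ba-pi1}, and your argument is precisely the standard route that reference takes: translate between (DE) and (ACP) via Lemma~\ref{cara} and invoke the classical equivalence between well-posedness of an abstract Cauchy problem for a closed, densely defined operator and generation of a $C_0$-semigroup. Your proof is essentially correct, but one justification in the direction (i)~$\Rightarrow$~(ii) is misapplied. You obtain boundedness of $\cT(t)$ on $D(\cA)$ for each fixed $t$ and then invoke the uniform boundedness principle to get local uniform boundedness in $t$; at that stage UBP is not available, because you only have pointwise orbit bounds on the dense --- hence meager --- subspace $D(\cA)$ of $\cE$, not on a Banach space. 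The fix is already built into the definition of well-posedness: the convergence $u(x_n,f_n,t)\to u(x,f,t)$ is required to be \emph{uniform for $t$ in compact intervals}, so for each $T>0$ the linear map $\xf\mapsto u(x,f,\cdot)$ from $(D(\cA),\|\cdot\|_{\cE})$ into $C([0,T],X)$ is sequentially continuous, hence bounded, which is exactly the local uniform estimate $\sup_{t\le T}\|u(x,f,t)\|\le C_T\|\xf\|_{\cE}$. Relatedly, note that continuous dependence as defined only controls the first component; you should record that the second component follows from it, via $\|u_t\|_{L^p}^p\le\|f\|_{L^p}^p+\int_0^t\|u(s)\|^p\,ds$, before concluding that $\cT(t)$ is bounded on $D(\cA)$ in the $\cE$-norm locally uniformly in $t$. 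With these two adjustments the rest of your argument (extension by density, strong continuity, and identification of the generator with $\cA$ via closedness and the core argument on the invariant dense subspace $D(\cA)$) goes through.
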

As a consequence of Lemma \ref{cara} and Proposition
\ref{caratterizzazione}, we have that if $(\cA ,D(\cA ))$
generates a strongly continuous semigroup \SG \!\!, then the
solutions of equation (DE) are given by the first component of the
function $t\mapsto \cT (t)\xf$ for $\xf\in D(\cA)$.

By means of the perturbation theorem of Miyadera-Voigt (see
\cite{miy, voigt1, voigt2} and \cite[Corollary III.3.16]{en-na}) one can formulate the following
sufficient condition for the well-posedness of (DE), see
\cite[Theorem 3.3, Examples 3.4]{ba-pi1}, \cite{ma-vo}.

\begin{cor} \label{esempi}
Assume that $(B,D(B))$ generates a strongly continuous semigroup
\sgs on $X$, $\infty>p\geq 1$, and that there exists a function
$\eta :[-1,0]\rightarrow \cL (X)$
 of bounded variation such that  $\Phi :C([-1,0],X)\rightarrow X$ is given by the Riemann-Stieltjes integral
\begin{equation} \label{phi}
\Phi(f):=\int_{-1}^0 d\eta f.
\end{equation}
Then $(\cA ,D(\cA ))$ is a generator on $\cE$.
\end{cor}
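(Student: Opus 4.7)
The plan is to write $\cA = \cA_0 + \cB$ as a Miyadera-Voigt perturbation, where
\[
\cA_0 := \begin{pmatrix} B & 0 \\ 0 & \frac{d}{d\sigma}\end{pmatrix}, \qquad \cB := \begin{pmatrix} 0 & \Phi \\ 0 & 0 \end{pmatrix},
\]
both defined on the domain $D(\cA)$ given in (\ref{delaydomain}). Since $\cB$ is bounded with respect to a space of continuous functions (through the Stieltjes representation) but unbounded on $\cE$, the natural framework is the Miyadera-Voigt theorem rather than a bounded perturbation result. The three steps are: (i) verify that $\cA_0$ generates a $C_0$-semigroup \SGO on $\cE$; (ii) check the Miyadera-Voigt integral bound for $\cB$ along the orbits of \SGO; (iii) read off the conclusion.

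For step (i), $\cA_0$ is the "delay generator without delay," so the semigroup is given explicitly by
\[
\cT_0 (t)\xf = \begin{pmatrix} S(t)x \\ f_t \end{pmatrix}, \quad f_t(\sigma) = \begin{cases} S(t+\sigma)x, & t+\sigma\geq 0,\\ f(t+\sigma), & t+\sigma<0,\end{cases}
\]
which can be verified by direct computation or by invoking the corresponding result in \cite{ba-pi1}. The boundary condition $f(0)=x$ in $D(\cA_0)$ is exactly what is needed for $f_t$ to stay in $\Wp$ when started from $D(\cA_0)$. For step (ii), since $\cB\cT_0(s)v = \bigl(\begin{smallmatrix} \Phi f_s \\ 0 \end{smallmatrix}\bigr)$, one has to estimate $\int_0^{t_0} \|\Phi(f_s)\|_X\, ds$ by a small multiple of $\|v\|_\cE$. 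Using $\Phi(g)=\int_{-1}^0 d\eta\, g$ and splitting $f_s$ into its two pieces, for $s\leq 1$ write
\[
\Phi(f_s) \;=\; \int_{[-s,0]} d\eta(\sigma)\, S(s+\sigma)x \;+\; \int_{[-1,-s)} d\eta(\sigma)\, f(s+\sigma).
\]
The first term is dominated by $M\,|\eta|([-s,0])\,\|x\|$ with $M:=\sup_{0\leq \tau\leq 1}\|S(\tau)\|$, so its integral over $[0,t_0]$ tends to $0$ as $t_0\downarrow 0$ by dominated convergence. For the second term, Fubini followed by Hölder gives, for $1<p<\infty$,
\[
\int_0^{t_0}\!\!\int_{[-1,-s)} \|f(s+\sigma)\|\, d|\eta|(\sigma)\, ds \;\leq\; |\eta|([-1,0])\cdot t_0^{1/p'}\,\|f\|_{L^p},
\]
which also vanishes as $t_0\downarrow 0$.

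The main obstacle is precisely this uniform Miyadera-Voigt estimate, both in the bookkeeping of the two pieces of $f_s$ and in making sure the Fubini-Hölder step yields a factor that vanishes with $t_0$; the case $p=1$ requires a slightly more delicate dominated-convergence argument in place of Hölder, using the absolute continuity of the Lebesgue integral on $\|f\|$. Once a $t_0>0$ is chosen so that the combined bound is $\leq q\|v\|_\cE$ with $q<1$ for all $v\in D(\cA_0)$, the Miyadera-Voigt theorem (\cite{miy,voigt1,voigt2} or \cite[Corollary III.3.16]{en-na}) yields that $\cA_0+\cB=\cA$ with domain $D(\cA)$ generates a $C_0$-semigroup on $\cE$, completing the proof.
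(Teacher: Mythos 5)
Your strategy---splitting $\cA=\cA_0+\cB$ with $\cB=\bigl(\begin{smallmatrix}0&\Phi\\0&0\end{smallmatrix}\bigr)$, identifying the unperturbed semigroup \SGO explicitly, and verifying the Miyadera--Voigt integral condition for $s\mapsto\Phi(f_s)$---is exactly the route the paper intends: it gives no proof of its own but delegates to the Miyadera--Voigt theorem and to \cite[Theorem 3.3, Examples 3.4]{ba-pi1}. For $1<p<\infty$ your estimates are correct: the Fubini--H\"older step yields the factor $t_0^{1/p'}$ that makes the constant small, and the $S(\cdot)x$-piece is controlled by $M\,t_0\,|\eta|([-1,0])\,\|x\|$, so a suitable $t_0$ gives $q<1$ and the theorem applies.

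The gap is the case $p=1$, which the statement explicitly includes. Your proposed repair---replacing H\"older by ``a more delicate dominated-convergence argument using the absolute continuity of the Lebesgue integral''---cannot close it, because the Miyadera--Voigt theorem demands $\int_0^{t_0}\|\cB\cT_0(s)v\|\,ds\le q\|v\|_{\cE}$ with a \emph{single} $q<1$ and a \emph{single} $t_0>0$ valid for all $v\in D(\cA_0)$, whereas absolute continuity of $\int\|f\|$ is a statement about one fixed $f$ and is not uniform over the unit ball of $L^1$. In fact the small-constant estimate genuinely fails for $p=1$: take $X=\CC$, $\eta=b\,\delta_{-1/2}$ with $|b|\ge1$ (a point delay, listed immediately after the corollary as the basic special case), $x=0$, and $f\in W^{1,1}$ a bump of unit $L^1$-mass supported in $(-\tfrac12,-\tfrac12+\varepsilon)$ with $\varepsilon<t_0$; then $\int_0^{t_0}\|\Phi(f_s)\|\,ds=|b|\int_{-1/2}^{-1/2+t_0}|f(r)|\,dr=|b|\,\|f\|_{L^1}\ge\|v\|_{\cE}$, so no $q<1$ is attainable for any $t_0>0$. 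The corollary is nevertheless true for $p=1$ (one can solve by steps, or see \cite{ma-vo}), so what is missing is a genuinely different device for that case---for instance a step-method or a higher-order Dyson--Phillips/iterated Miyadera argument in which the repeated delays eventually leave the interval $[-1,0]$---rather than a refinement of the dominated-convergence bound.
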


An important special case is the operator $\Phi$ defined by
\begin{equation*}
\ds{\Phi (f):=\sum_{k=0}^{n}B_k f(h_k )},\quad f\in\Wp,
\end{equation*}
where $B_k \in\cL(X)$ and $h_k \in [-1,0]$ for $k=0,\ldots ,n$.

It was also shown in \cite{ba-pi1} that the class of delay
operators considered in Corollary \ref{esempi} satisfies the
following.

\begin{defi} \label{admissible}
We call the delay operator $\Phi\in \cL(W^{1,p}([-1,0],X),X)$
\textit{admissible} if
\begin{itemize}
\item[(a)] the operator $(\cA, D(\cA))$ is a generator for each generator $(B,D(B))$ and
\item[(b)] the function $\lambda\mapsto \Phi R(\lambda,A_0)$ is a bounded analytic function on the halfplane $\{\lambda\in \CC:\Re \lambda>\omega\}$ for all $\omega\in \RR$.
\end{itemize}
\end{defi}

We now characterize the resolvent set and the resolvent operator
of $\cA$ (see \cite[Lemma 4.1]{ba-pi1}). Here
$\epsilon_{\lambda}(t):=e^{\lambda t}$ and
$\Phi_{\lambda}\in\cL(X)$ is defined by
$\Phi_{\lambda}x:=\Phi(\epsilon_{\lambda}\times
Id)x=\Phi(e^{\lambda\cdot}x)$ for $x\in X$. The operator
$(A_0,D(A_0))$ is the generator of the nilpotent left shift
semigroup $(T_0(t))_{t\geq 0}$ in $\Lp$.
\begin{lemma}\label{spectrum}
Let $X$ be a Banach space, $(B,D(B))$ be linear, closed and
densely defined, and $\Phi:\Wp\longrightarrow X$ be linear and
bounded. Let $(\cA, D(\cA))$ be the operator matrix defined in
(\ref{delaymatrix}) and (\ref{delaydomain}). Then $\lambda\in\rho
(\cA )$ if and only if $\lambda\in\rho (B+\Phi_{\lambda })$.
Moreover, for $\lambda\in\rho (\cA )$ the resolvent $R(\lambda
,\cA )$ is given by
\begin{equation}\label{risolvente}
\begin{pmatrix}
R(\lambda ,B+\Phi_{\lambda }) & R(\lambda ,B+\Phi_{\lambda })\Phi
R(\lambda ,A_0 )\\ \epsilon_{\lambda }\otimes R(\lambda
,B+\Phi_{\lambda })& [\epsilon_{\lambda }\otimes R(\lambda
,B+\Phi_{\lambda})\Phi +Id]R(\lambda ,A_0 )
\end{pmatrix}.
\end{equation}
\end{lemma}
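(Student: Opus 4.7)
The plan is to solve the resolvent equation $(\lambda-\cA)\binom{x}{f}=\binom{y}{g}$ explicitly; from that explicit formula both the equivalence of resolvent sets and the matrix representation of $R(\lambda,\cA)$ can be read off. Writing out the action of $\cA$, the system reads $(\lambda-B)x-\Phi f=y$ and $\lambda f-f'=g$, subject to the compatibility constraint $f(0)=x$ that is built into $D(\cA)$.

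First I would solve the second equation, which is a first-order ODE in $\sigma\in[-1,0]$ with values in $X$, using the boundary condition $f(0)=x$. Variation of constants gives
$$ f(\sigma)=e^{\lambda\sigma}x+\int_{\sigma}^{0}e^{\lambda(\sigma-s)}g(s)\,ds=(\epsilon_{\lambda}\otimes x)(\sigma)+(R(\lambda,A_0)g)(\sigma), $$
using the well-known resolvent of the nilpotent left shift generator $A_0$. Substituting into the first equation and noting that $\Phi(\epsilon_{\lambda}\otimes x)=\Phi_{\lambda}x$ by the very definition of $\Phi_{\lambda}$, one obtains
$$ (\lambda-B-\Phi_{\lambda})x=y+\Phi R(\lambda,A_0)g. $$
At this point the operator $B+\Phi_{\lambda}$ and the cross-term $\Phi R(\lambda,A_0)$ appear naturally, and the direction $\lambda\in\rho(B+\Phi_{\lambda})\Rightarrow\lambda\in\rho(\cA)$ is immediate: invert the displayed equation for $x$ by applying $R(\lambda,B+\Phi_{\lambda})$, substitute back into the formula for $f$, and collect the four resulting blocks, each of which is bounded between the appropriate spaces. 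The outcome is precisely formula (\ref{risolvente}).

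For the converse I would use uniqueness of the ODE solution to lift kernels and produce preimages. If $(\lambda-B-\Phi_{\lambda})x=0$, the pair $\binom{x}{\epsilon_{\lambda}\otimes x}$ lies in $D(\cA)$ and is annihilated by $\lambda-\cA$, so $\lambda\in\rho(\cA)$ forces $x=0$. For surjectivity, given $y\in X$ set $\binom{x}{f}:=R(\lambda,\cA)\binom{y}{0}$; the equation $\lambda f-f'=0$ combined with $f(0)=x$ forces $f=\epsilon_{\lambda}\otimes x$, and the first row then reads $(\lambda-B-\Phi_{\lambda})x=y$.

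The only points requiring a bit of care are checking that $R(\lambda,A_0)g$ belongs to $W^{1,p}([-1,0],X)$ and vanishes at $\sigma=0$, so that the constructed $f$ lies in the $W^{1,p}$-space demanded by $D(\cA)$ and meets the compatibility condition; and that $\Phi R(\lambda,A_0)g$ is meaningful, which is guaranteed by the standing hypothesis $\Phi\in\cL(W^{1,p}([-1,0],X),X)$. These verifications are routine once the formula for $R(\lambda,A_0)$ is in hand, so there is no real obstacle — the lemma is essentially bookkeeping around the explicit solution of the ODE component.
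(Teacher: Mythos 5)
Your proposal is correct: solving the second component's ODE with the boundary condition $f(0)=x$ via $f=\epsilon_\lambda\otimes x+R(\lambda,A_0)g$ and substituting into the first row reduces everything to the invertibility of $\lambda-B-\Phi_\lambda$, and your converse argument (kernel elements lift to $\binom{x}{\epsilon_\lambda\otimes x}$, preimages of $\binom{y}{0}$ project to preimages of $y$) is sound. The paper itself gives no proof here --- it cites \cite[Lemma 4.1]{ba-pi1} --- and your argument is exactly the standard one used there, so there is nothing to add beyond the routine remarks you already make about $R(\lambda,A_0)g$ lying in $W^{1,p}$ and vanishing at $0$.
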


\section{Hyperbolicity and stability}

In the following, assume that $X$ is a Hilbert space and  $p=2$.
It follows that $\cE$ will be also a Hilbert space and we may use
the powerful Gearhart spectral mapping theorem (see e.g. \cite{gear, herb,
pruss}) to characterize
hyperbolicity and exponential stability of the delay semigroup.
The theorem can be found, e.g., in \cite[Theorem V.1.11]{en-na}
and \cite[Theorem V.1.18]{en-na} in the form we quote it. For the hyperbolicity, this
means that a semigroup \sg with generator $(G,D(G))$ in a Hilbert
space $X$ is hyperbolic if and only if $i\RR\subset \rho(G)$ and
$\sup\{\|R(i\omega,G)\|\,:\, \omega \in \RR\}<\infty$. There are
recent generalizations of this result to the Banach space case,
see \cite{ka-lu, la-sh}, which may  allow in the near future to
generalize  the results presented here to the Banach space case.

\begin{theo} \label{gen.hyp}
Let $X$ be a Hilbert space and consider the equation
\textnormal{(DE)}. Assume that $\Phi$ is admissible, the semigroup
$(B,D(B))$ generates a hyperbolic semigroup and consider
\begin{equation} \label{gen.cond.h.1}
a_{n}:=\sup_{\omega\in\RR}\left\|(\Phi_{i\omega}R(i\omega,B))^n\right\|<\infty.
\end{equation}
If
\begin{equation} \label{gen.cond.h.2}
a:=\sum_{n=0}^{\infty}a_{n}<\infty,
\end{equation}
then $(\cA,D(\cA))$ generates a hyperbolic semigroup.
\end{theo}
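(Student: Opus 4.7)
The plan is to verify the hypotheses of Gearhart's theorem as quoted in this section: it suffices to establish that $i\RR\subset\rho(\cA)$ and that $\sup_{\omega\in\RR}\|R(i\omega,\cA)\|<\infty$. By Lemma~\ref{spectrum}, $i\omega\in\rho(\cA)$ precisely when $i\omega\in\rho(B+\Phi_{i\omega})$, and the uniform resolvent bound will be controlled block by block in the matrix (\ref{risolvente}). Since $B$ generates a hyperbolic semigroup on the Hilbert space $X$, Gearhart applied to $B$ itself first yields $M:=\sup_{\omega\in\RR}\|R(i\omega,B)\|<\infty$, to be used throughout. The existence of \SG itself is automatic from admissibility (Definition~\ref{admissible}(a)).

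The core step is to invert the additive perturbation on the imaginary axis. Factoring
\begin{equation*}
i\omega-B-\Phi_{i\omega}=\bigl(I-\Phi_{i\omega}R(i\omega,B)\bigr)(i\omega-B)
\end{equation*}
on $D(B)$, it suffices to invert $I-\Phi_{i\omega}R(i\omega,B)$ on $X$ uniformly in $\omega$. I would produce the inverse as a Neumann series $S_\omega:=\sum_{n\ge 0}(\Phi_{i\omega}R(i\omega,B))^n$. Hypothesis (\ref{gen.cond.h.1}) gives $\|(\Phi_{i\omega}R(i\omega,B))^n\|\le a_n$ uniformly in $\omega$, and the summability (\ref{gen.cond.h.2}) shows that the partial sums form a Cauchy sequence in $\cL(X)$ uniformly in $\omega$, hence converge in operator norm to a bounded $S_\omega$ with $\sup_{\omega}\|S_\omega\|\le a$. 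Because $a_n\to 0$, the iterates $(\Phi_{i\omega}R(i\omega,B))^{N+1}$ also vanish in norm, so a short check shows that $S_\omega$ is indeed the two-sided inverse of $I-\Phi_{i\omega}R(i\omega,B)$; hence
\begin{equation*}
R(i\omega,B+\Phi_{i\omega})=R(i\omega,B)\,S_\omega,\qquad \sup_{\omega\in\RR}\|R(i\omega,B+\Phi_{i\omega})\|\le Ma,
\end{equation*}
which, via Lemma~\ref{spectrum}, secures $i\RR\subset\rho(\cA)$.

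The remaining task is to read the uniform bound on $R(i\omega,\cA)$ off formula (\ref{risolvente}). The $(1,1)$ block is the operator just estimated. The $(2,1)$ block sends $x\in X$ to the function $s\mapsto e^{i\omega s}R(i\omega,B+\Phi_{i\omega})x$, whose norm as a map $X\to L^2([-1,0],X)$ equals $\|e^{i\omega\cdot}\|_{L^2}\cdot\|R(i\omega,B+\Phi_{i\omega})\|\le Ma$. The two remaining blocks involve the factors $\Phi R(i\omega,A_0)$ and $R(i\omega,A_0)$: admissibility of $\Phi$, applied on any halfplane $\{\lambda\in\CC:\Re\lambda>-\varepsilon\}$, supplies a uniform bound on the first, while an elementary calculation with the explicit formula for the resolvent of the nilpotent left shift gives a uniform bound on the second. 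Assembling these estimates yields $\sup_{\omega\in\RR}\|R(i\omega,\cA)\|<\infty$, and Gearhart's theorem finishes the argument.

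The only genuinely delicate point is the uniform convergence of the Neumann series. The naive hypothesis $\sup_{\omega}\|\Phi_{i\omega}R(i\omega,B)\|<1$ would be too restrictive for the intended applications, and the role of the summability condition (\ref{gen.cond.h.2}) is precisely to accommodate situations where individual compositions may have large norm provided the iterated products shrink on average in a way that is uniform in $\omega$. Once this uniform invertibility is established, the remaining block estimates in (\ref{risolvente}) are routine bookkeeping supported by the admissibility of $\Phi$.
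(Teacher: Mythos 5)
Your proposal is correct and follows essentially the same route as the paper: both verify the Gearhart criterion by reducing the boundedness of $R(i\omega,\cA)$ on $i\RR$ via Lemma \ref{spectrum} and formula (\ref{risolvente}) to that of $R(i\omega,B+\Phi_{i\omega})$, and both construct the latter as the Neumann series $R(i\omega,B)\sum_{n\geq 0}(\Phi_{i\omega}R(i\omega,B))^n$ bounded uniformly by $M\cdot a$. You merely spell out the details the paper leaves implicit (the factorization through $I-\Phi_{i\omega}R(i\omega,B)$, the two-sided inverse check using $a_n\to 0$, and the block-by-block estimates via admissibility), all of which are correct.
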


\begin{proof}
As a consequence of the above mentioned Theorem of Gearhart, the
numbers $a_n$ are defined for all $n\in \NN$ and we have to show
the boundedness of the resolvent operator given in
(\ref{risolvente}) on the line $i\RR$. Under our assumptions, this
is equivalent to the existence and boundedness of $R(\lambda
,B+\Phi_{\lambda})$ on the line $\{\lambda\in i\RR\}$.

Defining $M:=\sup_{\lambda\in i\RR}\|R(\lambda,B)\|$, we obtain
for all $\lambda\in i\RR$  that
\begin{equation*}
R(\lambda,B)\sum_{n=0}^{\infty}\left(\Phi_{\lambda}R(\lambda,B)\right)^n\in
\cL(X)
\end{equation*}
and
\begin{equation*}
\left\|R(\lambda,B)\sum_{n=0}^{\infty}\left(\Phi_{\lambda}R(\lambda,B)\right)^n\right\|\leq
M\sum_{n=0}^{\infty}\|\left(\Phi_{\lambda}R(\lambda,B)\right)^n\|\leq
 M\sum_{n=0}^{\infty}a_n = M\cdot a.
\end{equation*}
Easy calculations show that this operator defines an inverse for
$(\lambda-B-\Phi_{\lambda})$ being bounded on the line $i\RR$.
\end{proof}

\begin{cor}\label{cond.hyp}
Assume that $X$ is a Hilbert space, $\Phi$ is admissible,  and
$(B, D(B))$ generates a hyperbolic semigroup. If
\begin{equation}
\sup_{\omega\in\RR}\left\|\Phi_{i\omega}R(i\omega ,B) \right\|<1,
\end{equation}
or in particular if
\begin{equation}
\sup_{\omega\in\RR}\left\|\Phi_{i\omega}
\right\|<\frac1{\sup_{\omega\in\RR}\left\|R(i\omega ,B)\right\|},
\end{equation}
then $(\cA, D(\cA))$ generates a hyperbolic semigroup.
\end{cor}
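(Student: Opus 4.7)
The plan is to derive the corollary directly from Theorem \ref{gen.hyp} by producing a convergent geometric majorant for the series $\sum_n a_n$.

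First I would set $q:=\sup_{\omega\in\RR}\|\Phi_{i\omega}R(i\omega,B)\|$ and note that by assumption $q<1$. Using submultiplicativity of the operator norm, for every $\omega\in\RR$ and $n\in\NN$ we have
\begin{equation*}
\left\|(\Phi_{i\omega}R(i\omega,B))^n\right\|\leq \left\|\Phi_{i\omega}R(i\omega,B)\right\|^n\leq q^n,
\end{equation*}
so taking the supremum over $\omega$ yields $a_n\leq q^n$. Hence $a=\sum_{n=0}^\infty a_n\leq \sum_{n=0}^\infty q^n = (1-q)^{-1}<\infty$, and the hypotheses (\ref{gen.cond.h.1}) and (\ref{gen.cond.h.2}) of Theorem \ref{gen.hyp} are fulfilled. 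Theorem \ref{gen.hyp} then gives that $(\cA,D(\cA))$ generates a hyperbolic semigroup, which establishes the first part.

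For the second (particular) assertion, I would simply observe that the pointwise estimate
\begin{equation*}
\left\|\Phi_{i\omega}R(i\omega,B)\right\|\leq \left\|\Phi_{i\omega}\right\|\cdot\left\|R(i\omega,B)\right\|
\end{equation*}
implies
\begin{equation*}
\sup_{\omega\in\RR}\left\|\Phi_{i\omega}R(i\omega,B)\right\|\leq \left(\sup_{\omega\in\RR}\left\|\Phi_{i\omega}\right\|\right)\left(\sup_{\omega\in\RR}\left\|R(i\omega,B)\right\|\right)<1,
\end{equation*}
where the last inequality uses the second hypothesis together with the fact that, since $(B,D(B))$ generates a hyperbolic semigroup on the Hilbert space $X$, Gearhart's theorem gives $\sup_{\omega\in\RR}\|R(i\omega,B)\|<\infty$. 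Thus the second condition reduces to the first.

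There is essentially no obstacle here: the entire content is that a single smallness hypothesis on the norm at the imaginary axis forces geometric decay of the iterated norms, and hence absolute convergence of the Neumann-type series already exploited in the proof of Theorem \ref{gen.hyp}. The only point to keep in mind is to invoke Gearhart's theorem for $B$ to ensure that the denominator in the second inequality is finite and positive, so that the hypothesis is nontrivial.
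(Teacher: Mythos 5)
Your argument is correct and is exactly what the paper intends: the paper dispatches this corollary in one line as "an easy consequence of the previous theorem using Weierstrass' criterion," which is precisely your geometric-series majorization $a_n\leq q^n$ followed by an appeal to Theorem \ref{gen.hyp}, with the second hypothesis reduced to the first via submultiplicativity. No issues.
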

The proof is an easy consequence of the previous theorem using
Weierstrass' criterion on the convergence of infinite series.

As an important special case of hyperbolicity, we may now consider
uniform exponential stability.

\begin{theo} \label{gen.stab}
Let $X$ be a Hilbert space and consider the equation
\textnormal{(DE)}. Assume that $\Phi$ is admissible, that
$\omega_0(B)<0$, and consider
\begin{equation} \label{gen.cond.1}
a_{n}:=\sup_{\omega\in\RR}\left\|(\Phi_{i\omega}R(i\omega,B))^n\right\|<\infty.
\end{equation}
If
\begin{equation} \label{gen.cond.2}
a:=\sum_{n=0}^{\infty}a_{n}<\infty,
\end{equation}
then $\omega_0(\cA)<0$.
\end{theo}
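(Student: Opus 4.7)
The plan is to mirror the proof of Theorem \ref{gen.hyp}, but to establish uniform boundedness of the resolvent on the entire closed right half-plane $\{\Re\lambda\ge 0\}$, not merely on the imaginary axis. Once this is done, Gearhart's theorem in the Hilbert-space setting upgrades the conclusion from hyperbolicity to exponential stability, i.e., from $i\RR\subset\rho(\cA)$ with a bound to $\omega_0(\cA)<0$.

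First, the stronger hypothesis $\omega_0(B)<0$, combined with Gearhart's theorem applied to $(B,D(B))$, provides
\[
M:=\sup_{\Re\lambda\ge 0}\|R(\lambda,B)\|<\infty,
\]
so that $\{\Re\lambda\ge 0\}\subset\rho(B)$ with a uniform resolvent bound. Next I would like to transfer the estimate $\|(\Phi_{i\omega}R(i\omega,B))^n\|\le a_n$ from the imaginary axis to the closed right half-plane. For each fixed $n$, the operator-valued function $\lambda\mapsto(\Phi_\lambda R(\lambda,B))^n$ is analytic on a neighbourhood of $\{\Re\lambda\ge 0\}$, and admissibility (Definition \ref{admissible}(b)) together with $\omega_0(B)<0$ ensures it remains bounded on that half-plane. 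A Phragm\'en--Lindel\"of/maximum-modulus argument applied scalarly to $\lambda\mapsto\langle(\Phi_\lambda R(\lambda,B))^n x,y\rangle$ for $x,y\in X$, then taking suprema over $\|x\|=\|y\|=1$, yields
\[
\sup_{\Re\lambda\ge 0}\|(\Phi_\lambda R(\lambda,B))^n\|\le a_n.
\]

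With this bound, the Neumann-series calculation from the proof of Theorem \ref{gen.hyp} carries over verbatim for every $\lambda$ with $\Re\lambda\ge 0$: the series
\[
R(\lambda,B)\sum_{n=0}^{\infty}\bigl(\Phi_\lambda R(\lambda,B)\bigr)^n
\]
converges in norm to an inverse of $\lambda-B-\Phi_\lambda$ which is uniformly bounded by $Ma$. Lemma \ref{spectrum} and the resolvent formula (\ref{risolvente}) then yield $\{\Re\lambda\ge 0\}\subset\rho(\cA)$ together with a uniform bound on $R(\lambda,\cA)$ over this set. A final invocation of Gearhart's theorem, now applied to the semigroup generated by $\cA$, produces $\omega_0(\cA)<0$.

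The main obstacle I foresee is the second step: transferring the estimate on $(\Phi_\lambda R(\lambda,B))^n$ from the imaginary axis to the closed right half-plane. This needs, on the one hand, an a priori boundedness of $\Phi_\lambda R(\lambda,B)$ on $\{\Re\lambda\ge 0\}$ (supplied by admissibility together with $\omega_0(B)<0$) and, on the other hand, an operator-valued half-plane maximum principle to identify the supremum on the half-plane with the supremum on $i\RR$. Once these ingredients are in place, the remainder of the argument is essentially a rerun of Theorem \ref{gen.hyp}.
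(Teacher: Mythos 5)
Your proposal is correct and follows essentially the same route as the paper: the paper likewise invokes the generalized maximum principle to transfer the bounds $a_n$ from $i\RR$ to vertical lines $\Re\lambda=\alpha\ge 0$ and then reuses the Neumann-series argument of Theorem \ref{gen.hyp}. The only (cosmetic) difference is packaging: the paper concludes that $\cA-\alpha$ generates a hyperbolic semigroup for every $\alpha\ge 0$ and deduces $\omega_0(\cA)<0$ from that family of rescalings, whereas you assemble the same estimates into a uniform resolvent bound on the closed right half-plane and apply the stability form of Gearhart's theorem directly.
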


\begin{proof}
It follows from the inequality
\begin{equation*}
\sup_{\omega\in\RR}\left\|(\Phi_{\alpha+i\omega}R(\alpha+i\omega,B))^n\right\|\leq
\sup_{\omega\in\RR}\left\|(\Phi_{i\omega}R(i\omega,B))^n\right\|
\end{equation*}
for all $\alpha\geq 0$, which is a consequence of the generalized
maximum principle, and from Theorem \ref{gen.hyp} that the
semigroup generated by $(\cA-\alpha)$ is hyperbolic for all
$\alpha\geq 0$. Thus, $\omega_0(\cA)<0$.
\end{proof}

\begin{cor}\label{cond.stabilita}
Assume that $X$ is a Hilbert space, $\Phi$ is admissible,
$\omega_0 (B)<0$ and let $\alpha\in(\omega_0 (B),0]$. If
\begin{equation}
\sup_{\omega\in\RR}\left\|\Phi_{\alpha +i\omega}R(\alpha +i\omega
,B) \right\|<1,
\end{equation}
or in particular if
\begin{equation}
\sup_{\omega\in\RR}\left\|\Phi_{\alpha +i\omega}
\right\|<\frac1{\sup_{\omega\in\RR}\left\|R(\alpha +i\omega
,B)\right\|},
\end{equation}
then $\omega_0 (\cA )<\alpha\leq 0$.
\end{cor}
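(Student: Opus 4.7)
The plan is to mirror the derivation of Corollary \ref{cond.hyp} from Theorem \ref{gen.hyp}, but on the shifted vertical line $\alpha+i\RR$ in place of the imaginary axis. By Gearhart's theorem applied to the Hilbert space semigroup generated by $\cA-\alpha$, the inequality $\omega_0(\cA)<\alpha$ is equivalent to $\alpha+i\omega\in\rho(\cA)$ for every $\omega\in\RR$ together with $\sup_{\omega\in\RR}\|R(\alpha+i\omega,\cA)\|<\infty$, and this is what I would establish.

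First I would invert $(\alpha+i\omega)-B-\Phi_{\alpha+i\omega}$ uniformly in $\omega$. Since $\alpha>\omega_0(B)$, Gearhart applied to the exponentially stable semigroup generated by $B-\alpha$ already gives $M:=\sup_{\omega\in\RR}\|R(\alpha+i\omega,B)\|<\infty$. Setting $q:=\sup_{\omega\in\RR}\|\Phi_{\alpha+i\omega}R(\alpha+i\omega,B)\|<1$, the geometric bound $\|(\Phi_{\alpha+i\omega}R(\alpha+i\omega,B))^n\|\le q^n$ makes the Neumann-series computation used in the proof of Theorem \ref{gen.hyp} carry over verbatim: the operator $R(\alpha+i\omega,B)\sum_{n\ge 0}(\Phi_{\alpha+i\omega}R(\alpha+i\omega,B))^n$ is a bounded inverse of $(\alpha+i\omega)-B-\Phi_{\alpha+i\omega}$ whose norm is at most $M/(1-q)$, uniformly in $\omega$. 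The second, stronger-looking hypothesis implies the first via the sub-multiplicativity estimate $\|\Phi_{\alpha+i\omega}R(\alpha+i\omega,B)\|\le\|\Phi_{\alpha+i\omega}\|\cdot\|R(\alpha+i\omega,B)\|$, so it is covered automatically.

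By Lemma \ref{spectrum} this already forces $\alpha+i\omega\in\rho(\cA)$ with $R(\alpha+i\omega,\cA)$ given by the block matrix in (\ref{risolvente}). The concluding step, which is where essentially all the remaining work sits, is to verify that each of its four entries is uniformly bounded on $\alpha+i\RR$: the $(1,1)$ block is what was just controlled; $\Phi R(\alpha+i\omega,A_0)$ is uniformly bounded by condition~(b) in the definition of admissibility of $\Phi$; $R(\alpha+i\omega,A_0)$ is uniformly bounded on vertical lines from the explicit integral formula for the resolvent of the nilpotent left shift on the bounded interval $[-1,0]$; and the tensor operator $\epsilon_{\alpha+i\omega}\otimes(\cdot)$ contributes only the factor $\|e^{(\alpha+i\omega)\cdot}\|_p$, which depends on $\alpha$ alone. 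Combining these bounds gives $\sup_{\omega\in\RR}\|R(\alpha+i\omega,\cA)\|<\infty$, and Gearhart's theorem then yields $\omega_0(\cA)<\alpha\le 0$. The genuine obstacle is really this bookkeeping: without admissibility (b) of $\Phi$ the upper row of (\ref{risolvente}) could fail to be uniformly bounded as $|\omega|\to\infty$, and the whole argument would collapse.
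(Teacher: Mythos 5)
Your Neumann-series inversion of $(\alpha+i\omega)-B-\Phi_{\alpha+i\omega}$, the reduction of the second hypothesis to the first via submultiplicativity, and the block-by-block boundedness check of (\ref{risolvente}) are all sound. The gap is in your opening sentence: the form of Gearhart's theorem you invoke is not the one that controls growth bounds. On a Hilbert space, ``$i\RR\subset\rho(G)$ together with $\sup_{\omega\in\RR}\|R(i\omega,G)\|<\infty$'' characterizes \emph{hyperbolicity} of the semigroup (this is exactly the version quoted at the start of Section 3 and used in Theorem \ref{gen.hyp}); the characterization of $\omega_0(G)<0$ requires the resolvent to exist and be uniformly bounded on the whole closed right half-plane, not merely on one vertical line. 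The operator $G=Id$ on $\CC$ shows the difference: its resolvent is bounded on $i\RR$, yet $\omega_0(G)=1$. So what your argument actually delivers is that $e^{-\alpha t}\cT(t)$ is hyperbolic, which does not by itself exclude spectrum of $\cA$ to the right of the line $\alpha+i\RR$, and hence does not give $\omega_0(\cA)<\alpha$.

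The missing ingredient is precisely the step the paper inserts in the proof of Theorem \ref{gen.stab}: by the generalized maximum principle, applied to the bounded analytic function $\lambda\mapsto\Phi_{\lambda}R(\lambda,B)$ on a half-plane $\{\Re\lambda>\alpha'\}$ with $\omega_0(B)<\alpha'<\alpha$, one gets $\sup_{\omega}\|\Phi_{\beta+i\omega}R(\beta+i\omega,B)\|\le\sup_{\omega}\|\Phi_{\alpha+i\omega}R(\alpha+i\omega,B)\|<1$ for every $\beta\ge\alpha$. Your computation then runs verbatim on each line $\beta+i\RR$, and together with $\sup_{\Re\lambda\ge\alpha}\|R(\lambda,B)\|<\infty$ it yields uniform boundedness of $R(\lambda,\cA)$ on the closed half-plane $\{\Re\lambda\ge\alpha\}$ (equivalently, hyperbolicity of $e^{-\beta t}\cT(t)$ for all $\beta\ge\alpha$, whence $r(e^{-\alpha}\cT(1))<1$); only then does Gearhart's theorem, in its stability form, give $\omega_0(\cA)<\alpha\le 0$. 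The paper's own (implicit) route is the more economical rescaling: apply Theorem \ref{gen.stab}, whose proof already contains the maximum-principle step, to $B-\alpha$ and the delay operator $f\mapsto\Phi(e^{\alpha\cdot}f)$, whose symbol on the imaginary axis is $\Phi_{\alpha+i\omega}$, with the geometric bound $a_n\le q^n$ supplied by Weierstrass' criterion exactly as in Corollary \ref{cond.hyp}.
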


We demonstrate in the following example that the results obtained
in Theorem \ref{gen.stab} are more general then the ones in
\cite{ba-pi1}.
\begin{cor}
Assume that $X$ is a Hilbert space, $\Phi=C\delta_{-1}$ for
$C\in\cL(X)$ commuting with $(B,D(B))$, that $\omega_0(B)<0$ and
that
\begin{equation*}
r(C)<\frac1{\sup_{\omega\in\RR}\|R(i\omega,B)\|}.
\end{equation*}
Then $\omega_0(\cA)<0$.
\end{cor}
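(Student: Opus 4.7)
The plan is to reduce to Theorem \ref{gen.stab}. Admissibility of $\Phi=C\delta_{-1}$ falls under Corollary \ref{esempi} (take $\eta$ with a single jump of size $C$ at $-1$), and $\omega_0(B)<0$ is in the hypothesis, so only the summability condition \eqref{gen.cond.2} has to be verified.

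First I would compute $\Phi_{i\omega}$ explicitly. By the definition preceding Lemma \ref{spectrum}, $\Phi_\lambda x = \Phi(e^{\lambda\cdot}x) = e^{-\lambda}Cx$, so $\Phi_{i\omega}=e^{-i\omega}C$. The decisive observation is commutativity: because $C$ commutes with $B$ it also commutes with $R(i\omega,B)$ for every $\omega\in\RR$ (note $i\RR\subset\rho(B)$ since $\omega_0(B)<0$). Hence the $n$-th power factors cleanly,
\begin{equation*}
(\Phi_{i\omega}R(i\omega,B))^n = e^{-in\omega}\, C^n\, R(i\omega,B)^n,
\end{equation*}
and with $M:=\sup_{\omega\in\RR}\|R(i\omega,B)\|<\infty$ one obtains the uniform bound $a_n\leq M^n\|C^n\|$.

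Finally one checks $\sum_n M^n\|C^n\|<\infty$ by the root test combined with the spectral-radius formula: $(M^n\|C^n\|)^{1/n}=M\|C^n\|^{1/n}\to M\,r(C)<1$ by assumption. Theorem \ref{gen.stab} then yields $\omega_0(\cA)<0$. The only nontrivial step is the commutation trick; without it, $\sup_{\omega}\|(\Phi_{i\omega}R(i\omega,B))^n\|$ could only be bounded by $(M\|C\|)^n$, which may fail to be summable even under the assumed spectral-radius condition, so exploiting $[C,B]=0$ is what makes the hypothesis on $r(C)$ (rather than $\|C\|$) sufficient.
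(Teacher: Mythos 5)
Your proposal is correct and follows essentially the same route as the paper: both exploit the commutativity of $C$ with $R(i\omega,B)$ to factor $(\Phi_{i\omega}R(i\omega,B))^n$ and obtain $a_n\leq M^n\|C^n\|$, then invoke the spectral radius formula to get geometric decay and apply Theorem \ref{gen.stab}. The only cosmetic difference is that the paper phrases the convergence via a comparison $a_n<q^n$ for $n\geq n_0$ (Weierstrass' criterion) where you use the root test directly, and you additionally spell out the admissibility of $\Phi=C\delta_{-1}$, which the paper leaves implicit.
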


\begin{proof}
We use ideas analogous to \cite[Theorem IV.3.6]{kato}. Let us
denote by $M:=\sup_{\omega\in\RR}\|R(i\omega,B)\|$. Our assumption
means that there exists $0<q<1$ such that $r(C)\cdot M<q<1$. We
obtain that there exists $n_0\in\NN$ such that
\begin{equation*}
\|C^n\|^{\frac1n}\cdot M<q<1 \qquad \text{for all } n\geq n_0.
\end{equation*}
This means that
\begin{equation*}
a_{0,n}=\sup_{\omega\in\RR}\|(\Phi_{i\omega}R(i\omega,B))^n\| \leq
\sup_{\omega\in\RR}\|R(i\omega,B)^n\|\sup_{\omega\in\RR}\|\Phi_{i\omega}^n\|\leq
\|C^n\|\cdot M^n < q^n
\end{equation*}
for $n\geq n_0$, and the assertion follows by Weierstrass'
criterion.
\end{proof}

\section{An application: the effect of small delays}

The problem considered in this section is the following: Assume
that hyperbolicity or uniform exponential stability is known for
the solutions of the equation
\begin{equation*}
\text{(DE)}_{0}\qquad
\begin{cases}
        u'(t)=(B+C)u(t) ,&t\geq 0,\\
        u(0)=x, & \\
        u_0 =f,&
\end{cases}
\end{equation*}
where  $C\in \cL(X)$.

The question is, whether the same type of asymptotics holds for
the solutions of the equation
\begin{equation*}
\text{(DE)}_{\tau}\qquad
\begin{cases}
        u'(t)=Bu(t)+C u(t-\tau) ,&t\geq 0,\\
        u(0)=x, & \\
        u_0 =f,&
\end{cases}
\end{equation*}\index{$\text{(DE)}_{\tau}$}
where $\tau>0$ is ``small".

It is known, see e.g. \cite[Example B-IV.3.10]{nagel}, that if $X$
is a Banach lattice, $C$ is a positive operator and $(B,D(B))$
generates a positive semigroup, then the solutions of
(DE)$_{\tau}$ are uniformly exponentially stable if and only if
the solutions of (DE)$_0$ are uniformly exponentially stable.

It is an open question however, what happens to the hyperbolicity
in the positive case. The example of Montgomery-Smith
\cite{montgomery} suggests that this result may not remain true.

The first who examined this effect was R. Datko \cite{datko1,
datko2, datko3, DaLaPo, da-yo}. It is known for finite dimensional
equations that the stability cannot be destroyed and there exists
an extensive literature on delay dependent stability conditions,
see e.g. \cite{gyori1, gyori2}. For similar questions in the
parabolic case we refer to \cite{gu-ra-schn, schnaubelt}. There is
a recent exposition of this problem by J. Hale and S. Verduyn
Lunel \cite{ha-lu2, ha-lu3}, where many examples of functional
differential and difference equations are considered. A control
theoretical investigation using transfer functions was made for
compact feedback in \cite{ReTo}.

Before considering the abstract problem, we demonstrate on some
simple examples how the stability can be destroyed. Though the
following example seems to be known, we include it here because it
is the simplest example and we could not find it written  in the
literature.
\begin{exa}\label{tauk}
Let $(B,D(B))$ be the (unbounded) generator of a unitary group in
an infinite dimensional Hilbert space $H$ and let $C:=d\cdot Id$
for $d<0$. Then $(B+C,D(B))$ generates an exponentially stable
semigroup. We show that there exists a sequence $(\tau_k)$,
$\tau_k \rightarrow 0$, such that the solution semigroup of the
equation (DE)$_{\tau_k}$ does not decay exponentially for each
$k\in\NN$.

To construct this sequence, take $(\mu_k)\subset \RR$,
$i\mu_k\in\sigma(B)$ such that $|\mu_k|\rightarrow \infty$ and
$\mu_k\neq -d$. Defining the numbers
\begin{equation*}
\tau_k:=
\begin{cases}
\frac{3\pi}{2(\mu_k+d)}, & \mu_k+d >0, \\ \frac{-\pi}{2(\mu_k+d)},
& \mu_k+d<0,
\end{cases}
\end{equation*}
and the operators $\Phi^{(k)}:= d \cdot Id \delta_{-\tau_k}$,
$\Phi^{(k)}_{\lambda}:= e^{-\lambda \tau_k} d \cdot Id$, we obtain
for the numbers $\lambda_k:=(\mu_k+d)i\in i\RR$ that
\begin{equation*}
\lambda_k\in \sigma(B+\Phi^{(k)}i_{\lambda_k}) = \sigma(B)+d\cdot
e^{-\lambda_k \tau_k} = \sigma(B)+d\cdot i.
\end{equation*}
By the spectral characterization in Lemma \ref{spectrum} it
follows that the associated operator $(\cA,D(\cA))$ can not
generate a uniformly exponentially stable semigroup. If we assume
further that $i\mu_k\in P\sigma(B)$, which e.g., is satisfied if
$(B,D(B))$ has compact resolvent, then we also find classical
solutions of (DE)$_{\tau_k}$ which are not decaying exponentially.
\end{exa}

The essence of this example can be formulated as follows.

\begin{theo}
Let $X$ be a Hilbert space and assume that $(B,D(B))$ generates a
strongly continuous semigroup such that there exists $\rho\in \RR$
and $\mu_k\in \RR$, $|\mu_k|\to \infty$ such that $\rho+i\mu_k\in
\sigma(B)$, i.e., the spectrum is unbounded along an imaginary
line. Then there exists $C\in\cL(X)$ and $(\tau_k)\subset\RR^+$,
$\tau_k\to 0$ such that $\omega_0(B+C)<0$ but the solutions of
\textnormal{(DE)$_{\tau}$} are not uniformly exponentially stable.
\end{theo}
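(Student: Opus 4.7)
The plan is to generalize Example \ref{tauk} from the purely imaginary spectral line to the shifted line $\rho + i\RR$. I would take the perturbation to be a scalar, $C:=d\cdot Id$, with $d\in \RR$ chosen sufficiently negative. Since $C$ commutes with $B$, one has $\omega_0(B+C)=\omega_0(B)+d$, so requiring $d<-\omega_0(B)$ secures $\omega_0(B+C)<0$; I would additionally demand $|d|>|\rho|$ so that the algebraic equation below is solvable with a nondegenerate angle.

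By Lemma \ref{spectrum}, the spectral condition for the delay generator $\cA$ associated with $\Phi=C\delta_{-\tau}$ becomes $\lambda\in\sigma(\cA)$ iff $\lambda-d e^{-\lambda\tau}\in\sigma(B)$. My ansatz is to look for purely imaginary spectrum, $\lambda_k = i\beta_k$, satisfying $i\beta_k - d e^{-i\beta_k\tau_k} = \rho + i\mu_k$. Separating real and imaginary parts yields the coupled system
\begin{equation*}
\cos(\beta_k\tau_k) = -\rho/d,\qquad \beta_k + d\sin(\beta_k\tau_k) = \mu_k.
\end{equation*}
Setting $\phi_0 := \arccos(-\rho/d)\in (0,\pi)$ and $s := \sin\phi_0 > 0$, I would pass to a subsequence so that either $\mu_k\to +\infty$ or $\mu_k\to -\infty$, and then prescribe $\beta_k\tau_k$ explicitly: take $\beta_k\tau_k := \phi_0 + 2\pi$ in the first case, giving $\sin(\beta_k\tau_k)=s$, $\beta_k = \mu_k - ds \to +\infty$ and $\tau_k = (\phi_0+2\pi)/\beta_k\to 0^+$; or $\beta_k\tau_k := -\phi_0$ in the second, giving $\sin(\beta_k\tau_k)=-s$, $\beta_k = \mu_k + ds \to -\infty$ and $\tau_k = -\phi_0/\beta_k \to 0^+$. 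In either case $i\beta_k\in\sigma(\cA)$ for the delay $\tau_k$, so $\omega_0(\cA)\geq 0$ and uniform exponential stability fails.

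The only delicate point is the sign bookkeeping needed to ensure $\tau_k > 0$, which forces $\beta_k\tau_k$ to share sign with $\beta_k$ --- hence the distinct branches ($\phi_0+2\pi$ vs.\ $-\phi_0$) in the two subcases. Admissibility of $C\delta_{-\tau_k}$ is immediate from Corollary \ref{esempi}, so $\cA$ really is a generator for each $\tau_k$, and Lemma \ref{spectrum} applies. The real conceptual step is realizing that the single free parameter $\tau_k$ can simultaneously cancel the shift $\rho$ (via $-d\cos(\beta_k\tau_k)$) and leave the imaginary part large, which is exactly what makes $|d|>|\rho|$ the natural condition.
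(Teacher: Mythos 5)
Your proposal is correct and follows essentially the same route as the paper: a scalar perturbation $C=d\cdot Id$, the characteristic equation supplied by Lemma \ref{spectrum}, and delays $\tau_k\to 0$ tuned to the unbounded spectral sequence $\rho+i\mu_k$. The only (harmless) difference is that the paper fixes $\tau_k=\pi/|\mu_k|$ and then produces a root $\varepsilon+i\mu_k$ with $\varepsilon>0$ by an intermediate value argument, whereas you solve for $\tau_k$ so that the root lands exactly on the imaginary axis.
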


\begin{proof}
As in the previous example, take $C:=-\mu\cdot Id$, $\mu>0$,
$\mu>-\rho$ and define
\begin{equation*}
\tau_k:=\frac{\pi}{|\mu_k|}
\end{equation*}
for some $k\in\NN$. Then the corresponding characteristic
equations are again
\begin{equation*}
z-\lambda+\mu e^{-z\tau_k} = 0, \qquad \lambda\in\sigma(B).
\end{equation*}

Now take $\lambda=\rho+i\mu_k$ and put $z=\varepsilon + i\mu_k$
for some $\varepsilon>0$. Then we obtain
\begin{equation*}
i\mu_k +\varepsilon -\rho-i\mu_k +\mu e^{-i\mu_k\tau_k}\cdot
e^{-\varepsilon\tau_k} = 0,
\end{equation*}
and hence
\begin{equation*}
\varepsilon=\mu e^{-\varepsilon \tau_k} + \rho.
\end{equation*}
Since $\mu>-\rho$, there exists a positive real solution
$\varepsilon$.
\end{proof}
Unfortunately, if the stabilizing operator is not the identity,
the preceding technique can be applied only with enormous
difficulty even in cases where the spectral mapping theorem holds.
This is because we have in general no easy characterization of
$\sigma(B+\Phi_{\lambda})$, see \cite[Example IV. 3.8]{kato}.

Turning our attention now to the general problem of small delays,
we use an idea similar to  \cite[Section 5.4 (4.9)]{ha-lu1} and
transform the equation (DE)$_{\tau}$ into
\begin{equation*}
u'(t)=(B+C) u(t)+C\left(u(t-\tau)-u(t)\right).
\end{equation*}
We use the equality
\begin{equation} \label{ut_eq}
u(t_2)-u(t_1)=\left[S(t_2-t_1)-Id\right]u(t_1)+\int_{t_1}^{t_2}S(t_2-s)Cu(s-\tau)
ds
\end{equation}
for $t_2>t_1\geq 0$ following from (DE)$_{\tau}$. Substituting
$t_1=t-\tau$ and $t_2=t$, we obtain that
\begin{equation*}
u(t)-u(t-\tau)=\left[S(\tau)-Id\right]u(t-\tau)+\int_{-\tau}^{0}
S(-s)Cu(t+s-\tau) ds.
\end{equation*}

Thus, (DE)$_{\tau}$ can be written in the form
\begin{multline} \label{DEtau}
u'(t)=(B+C)u(t)\\
-C\left(\left[S(\tau)-Id\right]u(t-\tau)+\int_{-\tau}^{0}
S(-s)Cu(t+s-\tau) ds\right).
\end{multline}
Defining
\begin{equation} \label{phi_C}
\Phi f:=-C\left[S(\tau)-Id\right]\delta_{-\tau}f -
\int_{-\tau}^{0} CS(-s)C\delta_{s-\tau} f ds,
\end{equation}
where $\delta_{r} \in \cL\left(\Wp,X\right)$ is given by
$\delta_{r}(f):=f(r)$ for $r\in[-1,0]$, we see that the previous
stability results in Corollary \ref{cond.stabilita} are applicable
to our original problem and that (DE)$_{\tau}$ has the form
\begin{equation}\label{Detau2}
u'(t)=\left(B+C\right)u(t)+\Phi u_t.
\end{equation}

In order to be able to apply the stability results of Corollary
\ref{cond.stabilita}, or the hyperbolicity results of Corollary
\ref{cond.hyp}, we have to calculate
\begin{multline}\label{phibc}
\Phi_{\lambda}R(\lambda,B+C)x=-C\left[S(\tau)-Id\right]e^{-\lambda\tau}R(\lambda,B+C)x
\\ - \int_{-\tau}^{0} CS(-s)Ce^{-\lambda (s-\tau)} R(\lambda,B+C)x
ds.
\end{multline}

Defining
\begin{equation}\label{i1}
I^{\omega}_1(\tau):=C\left[S(\tau)-Id\right]e^{-i\omega\tau}R(i\omega,B+C)
\end{equation}
and
\begin{equation}\label{i2}
I^{\omega}_2(\tau)x:=\int_{-\tau}^{0} CS(-s)Ce^{-i\omega (s-\tau)}
R(i\omega,B+C)x ds,
\end{equation}
it would be sufficient to show that there exists $\kappa>0$ such
that $\sup_{\omega\in\RR}\|I^{\omega}_i(\tau)\|<\frac12$ for
$i=1,2$ and all $\tau\in (0,\kappa)$. Then, using Corollary
\ref{cond.stabilita} or Corollary \ref{cond.hyp} and
(\ref{phibc}), the assertion follows since
$\sup_{\omega\in\RR}\|\Phi_{i\omega}R(i\omega,B+C)\|\leq
\sup_{\omega\in\RR}\|I^{\omega}_1(\tau)\|+\sup_{\omega\in\RR}\|I^{\omega}_2(\tau)\|<1$.

The estimate on $I^{\omega}_2$ is
\begin{equation}\label{I2}
\|I^{\omega}_2(\tau)\|\leq \tau \|C\|^2K\|R(i\omega,B+C)\|,
\end{equation}
where $K:=\sup_{0\leq t\leq 1}\|S(t)\|$. Since
$\|R(i\omega,B+C)\|$ is uniformly bounded for all $\omega\in\RR$,
there exists $\kappa_2>0$ such that for all $\tau\in (0,\kappa_2)$
the estimate $\sup_{\omega\in\RR}\|I^{\omega}_2(\tau)\|<\frac12$
holds.

The estimate on $I^{\omega}_1$ is
\begin{multline}\label{I1}
\|I^{\omega}_1(\tau)\|\leq
\|C\|\cdot\|(S(\tau)-Id)R(i\omega,B+C)\| \\ \leq
\|C\|\cdot\|(S(\tau)-Id)R(\lambda,B)\|\cdot\|(\lambda-B)R(i\omega,B+C)\|,
\end{multline}
where $\lambda>\max\left\{\omega_0(B),0\right\}$ is fixed.

Since  $\|(\lambda-B)R(i\omega,B+C)\|$ is independent of $\tau$,
we only have to consider the term $(S(\tau)-Id)R(\lambda,B)$.

But then it follows from
\begin{multline*}
\|(S(\tau)-Id)R(\lambda,B)\|\leq
\|S(\tau)\|(1-e^{-\lambda\tau})\|R(\lambda,B)\| +
\|\int_0^{\tau}e^{-\lambda s}S(s) ds\| \\ \leq \tau
K\left(\|R(\lambda,B)\||\lambda|+1\right)
\end{multline*}
that for every $\omega\in\RR$
\begin{equation}\label{conver}
\lim_{\tau\to
0}\|C\left[S(\tau)-Id\right]e^{i\omega\tau}R(i\omega,B+C)\|=0.
\end{equation}

Example \ref{tauk} shows that in general this convergence cannot
be uniform. This is the point where we need some extra
assumptions.

As we could see in Example \ref{tauk}, the unboundedness of the
spectrum of $(B,D(B))$ along imaginary axes may cause trouble if
we allow any stabilizing operator $C$. In the following result on
the independence of stability of small delays the spectrum of the
generator also plays an important role.

\begin{theo} \label{stab.anal.boun}
Assume that $(B,D(B))$ generates an immediately norm continuous
semigroup  and that the  semigroup generated by $(B+C,D(B))$ is
exponentially stable or hyperbolic in the Banach space $X$. Then
there exists $\kappa>0$ such that the solution semigroup of
\textnormal{(DE)}$_{\tau}$ is exponentially stable or hyperbolic,
respectively,  for all $\tau\in (0,\kappa)$. Thus, the stability
and the hyperbolicity is not sensitive to small delays.
\end{theo}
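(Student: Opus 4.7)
The plan is to apply Corollary \ref{cond.stabilita} in the exponentially stable case (with $\alpha=0$) and Corollary \ref{cond.hyp} in the hyperbolic case to the reformulated delay equation \eqref{Detau2}, whose unperturbed generator is $(B+C,D(B))$ and whose delay operator is the $\Phi$ of \eqref{phi_C}. For $\tau<1$ this $\Phi$ is a point delay plus an operator-valued Riemann--Stieltjes integral of point delays with a bounded-variation kernel, hence admissible in the sense of Definition \ref{admissible}. The task therefore reduces to producing $\kappa>0$ such that $\sup_{\omega\in\RR}\|\Phi_{i\omega}R(i\omega,B+C)\|<1$ for all $\tau\in(0,\kappa)$, and by \eqref{phibc}--\eqref{i2} to driving both $\sup_\omega\|I_1^\omega(\tau)\|$ and $\sup_\omega\|I_2^\omega(\tau)\|$ below $\tfrac12$.

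Setting $M:=\sup_\omega\|R(i\omega,B+C)\|<\infty$ (finite by hyperbolicity of $B+C$), estimate \eqref{I2} immediately gives $\sup_\omega\|I_2^\omega(\tau)\|\leq \tau\|C\|^2KM$, which is below $\tfrac12$ for $\tau$ in some interval $(0,\kappa_2)$. The whole difficulty lies in bounding $I_1^\omega(\tau)$ uniformly in $\omega$: Example \ref{tauk} shows that the pointwise convergence \eqref{conver} cannot be upgraded to uniformity without additional structure, and immediate norm continuity is exactly the input that supplies it. Concretely, $(B+C,D(B))$ inherits immediate norm continuity from $(B,D(B))$ by bounded perturbation, and hence $\|R(r+i\omega,B+C)\|\to 0$ as $|\omega|\to\infty$ for every $r>\omega_0(B+C)$ by the standard eventually-norm-continuous resolvent decay (see \cite[Theorem II.4.18]{en-na}). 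In the purely hyperbolic case $\omega_0(B+C)$ may be nonnegative, but the resolvent identity
\begin{equation*}
R(i\omega,B+C)=R(r+i\omega,B+C)+r\,R(i\omega,B+C)\,R(r+i\omega,B+C)
\end{equation*}
combined with the uniform bound $M$ on $i\RR$ transfers the decay down to the imaginary axis, so that $\|R(i\omega,B+C)\|\to 0$ as $|\omega|\to\infty$ in either case.

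With this decay in hand I split the supremum at a threshold. Given $\varepsilon>0$, choose $N$ so that $\|R(i\omega,B+C)\|<\varepsilon$ for $|\omega|>N$; the crude bound $\|S(\tau)-Id\|\leq K+1$ then gives $\|I_1^\omega(\tau)\|\leq \|C\|(K+1)\varepsilon$ on the tail. On the compact set $|\omega|\leq N$ the range of $R(i\omega,B+C)$ lies in $D(B+C)=D(B)$, so the identity $Bx=(B+C)x-Cx$ and $(B+C)R(i\omega,B+C)=i\omega R(i\omega,B+C)-Id$ yield
\begin{equation*}
(S(\tau)-Id)R(i\omega,B+C)=\int_0^\tau S(s)\bigl(i\omega R(i\omega,B+C)-CR(i\omega,B+C)-Id\bigr)\,ds,
\end{equation*}
whence $\|I_1^\omega(\tau)\|\leq \|C\|\tau K(NM+\|C\|M+1)$ uniformly on $\{|\omega|\leq N\}$. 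Fixing $\varepsilon$ small first, then $N$, then $\kappa_1>0$, makes $\sup_\omega\|I_1^\omega(\tau)\|<\tfrac12$ on $(0,\kappa_1)$, and setting $\kappa:=\min(\kappa_1,\kappa_2,1)$ closes the argument. The main obstacle is securing the resolvent decay on $i\RR$ itself in the purely hyperbolic regime, since the usual norm-continuity theorems only furnish decay strictly to the right of the growth bound; the resolvent-identity trick above is what repairs this.
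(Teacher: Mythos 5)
Your proof is correct and follows essentially the same route as the paper: reduce to the bounds $\sup_\omega\|I_1^\omega(\tau)\|,\sup_\omega\|I_2^\omega(\tau)\|<\tfrac12$, use the immediate norm continuity inherited by $(B+C,D(B))$ under bounded perturbation to force $\|R(i\omega,B+C)\|\to 0$ as $|\omega|\to\infty$, and split the supremum over $\omega$ at a finite threshold. Your two refinements --- the resolvent-identity step transferring the decay from a vertical line $\Re\lambda=r>\omega_0(B+C)$ down to $i\RR$ in the hyperbolic case, and the explicit integral bound on $(S(\tau)-Id)R(i\omega,B+C)$ for $|\omega|\leq N$ in place of the paper's uniform-continuity argument on $[-L,L]\times[0,1]$ --- are both sound and make the argument slightly more self-contained than the published proof, which simply cites the resolvent decay on the imaginary axis.
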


\begin{proof}

We have to show that the convergence in (\ref{conver}) is uniform
in $\omega$.

To this end we use the immediate norm continuity of the semigroup
generated by $(B+C,D(B))$, see \cite[Theorem III.1.16(i)]{en-na}.
An important consequence  is that
$\lim_{|\omega|\to\infty}\|R(i\omega,B+C)\|=0$, see
\cite[Corollary II.4.19]{en-na}. Thus, there exists $L>0$ such
that
\begin{equation*}
\|R(i\omega,B+C)\|<\frac1{2\|C\|(K+1)}\quad \text{for }|\omega|>L,
\end{equation*}
where $K:=\sup_{0\leq t\leq 1}\|S(t)\|$.

For $\omega\in[-L,L]$, we recall that the function
\begin{equation*}
(\omega,\tau)\mapsto\|C\left[S(\tau)-Id\right]e^{i\omega\tau}R(i\omega,B+C)\|
\end{equation*}
is uniformly continuous on $[-L,L]\times[0,1]$. Thus, there exists
$\kappa_1>0$ such that for all $\tau\in(0,\kappa_1)$ and for all
$\omega\in [-L,L]$
\begin{equation*}
\|C\left[S(\tau)-Id\right]e^{i\omega\tau}R(i\omega,B+C)\|<\frac12.
\end{equation*}
Combining these estimates we obtain the desired statement.

The proof can be finished by choosing
$\kappa:=\min\left\{\kappa_1,\kappa_2\right\}$.
\end{proof}
We make the remark that the results of the previous theorem remain
true also if $X$ is a Banach space. This is because under the
conditions of the theorem, the delay semigroup will be eventually
norm continuous, see \cite[Proposition 5.3]{ba-pi1}, and for
eventually norm continuous semigroups the spectral mapping theorem
holds, see \cite[Theorem IV.3.10]{en-na}.

Analogous results were obtained by R. Schnaubelt \cite{schnaubelt}
 for non-autonomous equations in the parabolic case.

In the previous theorem we gave a condition on the generator
$(B,D(B))$ without any restriction on the stabilizing operator
$C$. In the following we also provide a condition involving $C$.

\begin{prop}
Let $(B, D(B))$ be a generator of a strongly continuous semigroup
\sgs in the Hilbert space $X$, $C\in\cL(X)$ be a compact operator
commuting with $B$ and that the  semigroup generated by
$(B+C,D(B))$ is exponentially stable or hyperbolic in the Hilbert
space $X$. Then  there exists $\kappa>0$ such that the solution
semigroup of \textnormal{(DE)}$_{\tau}$ is exponentially stable or
hyperbolic for all $\tau\in (0,\kappa)$. Thus, the stability and
the hyperbolicity is not sensitive to small delays.
\end{prop}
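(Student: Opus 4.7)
The plan is to follow the same template as in Theorem \ref{stab.anal.boun}, applying Corollary \ref{cond.stabilita} (in the exponentially stable case) or Corollary \ref{cond.hyp} (in the hyperbolic case) to the reformulation (\ref{Detau2}) of \textnormal{(DE)}$_{\tau}$, where the ``unperturbed'' generator is now $(B+C,D(B))$ and the delay operator $\Phi$ is the one defined in (\ref{phi_C}). Admissibility of this $\Phi$ follows from Corollary \ref{esempi}, since it is the sum of a point evaluation at $-\tau$ and an integral of point evaluations with an operator-valued kernel of bounded variation. The entire task is therefore to show that
$$\sup_{\omega\in\RR}\|\Phi_{i\omega}R(i\omega,B+C)\|<1$$
for all sufficiently small $\tau>0$, and by the decomposition (\ref{phibc}) this reduces to controlling the two terms $I_1^\omega(\tau)$ and $I_2^\omega(\tau)$ introduced in (\ref{i1})--(\ref{i2}).

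The estimate on $I_2^\omega(\tau)$ proceeds exactly as in Theorem \ref{stab.anal.boun}. Since $(B+C)$ is hyperbolic on the Hilbert space $X$, Gearhart's theorem yields $M:=\sup_{\omega\in\RR}\|R(i\omega,B+C)\|<\infty$, and (\ref{I2}) gives $\sup_\omega\|I_2^\omega(\tau)\|\leq \tau\|C\|^2 K M$, which is smaller than $\tfrac12$ for all $\tau\in(0,\kappa_2)$ with $\kappa_2:=1/(2\|C\|^2 K M)$. This half of the argument does not use compactness of $C$.

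The novelty is the treatment of $I_1^\omega(\tau)$, where the compactness of $C$ takes the place of the resolvent decay $\|R(i\omega,B+C)\|\to 0$ as $|\omega|\to\infty$ that powered Theorem \ref{stab.anal.boun}. Since $C$ commutes with $B$, it commutes with the semigroup \sgs, hence
$$I_1^\omega(\tau)=[S(\tau)-Id]\,C\,e^{-i\omega\tau}R(i\omega,B+C).$$
Because $C$ is compact, the set $\{Cx:\|x\|\leq 1\}$ is relatively compact, and the strong continuity of \sgs is uniform on relatively compact subsets of $X$. Consequently $\|[S(\tau)-Id]C\|\to 0$ in operator norm as $\tau\to 0$, and combining this with $\|R(i\omega,B+C)\|\leq M$ gives
$$\sup_{\omega\in\RR}\|I_1^\omega(\tau)\|\;\leq\;M\,\|[S(\tau)-Id]C\|\;\longrightarrow\;0.$$
Pick $\kappa_1>0$ so that this supremum is below $\tfrac12$ on $(0,\kappa_1)$ and set $\kappa:=\min\{\kappa_1,\kappa_2\}$; the conclusion then follows from Corollary \ref{cond.stabilita} or \ref{cond.hyp}.

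The only delicate point is the uniform-in-$\omega$ norm convergence $\|[S(\tau)-Id]C\|\to 0$; this is not deep, but it is the exact place where both hypotheses on $C$ — compactness together with commutation with $B$ — are used in combination, and it is what replaces the immediate norm continuity argument of Theorem \ref{stab.anal.boun}.
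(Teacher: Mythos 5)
Your proposal is correct and follows essentially the same route as the paper: commute $C$ past the semigroup and resolvent so that it sits next to $S(\tau)-Id$, use compactness of $C$ together with the coincidence of strong and uniform convergence on precompact sets to get $\|[S(\tau)-Id]C\|\to 0$ in norm, and bound $\|R(i\omega,B+C)\|$ uniformly via Gearhart. Your write-up is in fact slightly more explicit than the paper's (which leaves the $I_2^\omega$ estimate and the final assembly to the preceding general discussion), but the key idea is identical.
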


\begin{proof}
Again we only have to show that the convergence in (\ref{conver})
is uniform in $\omega$. Using that $C$ commutes with $B$ and hence
with the semigroup, we obtain
\begin{equation*}
C\left[S(\tau)-Id\right]e^{i\omega\tau}R(i\omega,B+C) =
\left[S(\tau)-Id\right]e^{i\omega\tau}R(i\omega,B+C)C.
\end{equation*}
By our assumptions, the set $CB(0,1)\subset X$ is precompact in
$X$. The proof can be finished by using the fact that on compact
sets the strong and the uniform topology coincide.
\end{proof}

\begin{rem}
To show that estimating  $I^{\omega}_1$ and $I^{\omega}_2$ is not
sharp, consider the well-known example $B=0$ and $C=d\cdot Id$ for
$d<0$ in the Banach space $\CC$. The direct calculation of the
formula (\ref{phibc}) shows that the solutions of (DE)$_{\tau}$
are exponentially stable if $|d|\tau<1$.

However, applying directly the spectral characterization of Lemma
\ref{spectrum} and using that
$\sigma(B+\Phi_{\lambda})=\left\{d\cdot e^{-\lambda\tau}\right\}$,
we obtain the well-known and best possible  estimate (see \cite[p.
135]{ha-lu1}) that the solutions decay exponentially if
\begin{equation}
|d|\tau < \frac{\pi}2.
\end{equation}
\end{rem}

\vskip 1truecm

\baselineskip=12pt

\thebibliography{99}

\bibitem{ba-pi1} B\'atkai, A., Piazzera, S.,
\textit{Semigroups and linear partial differential equations with
delay}, to appear in J. Math. Anal. Appl.

\bibitem{ba-pi2} B\'atkai, A., Piazzera, S., {\it  Damped wave equations with
delay}, Fields Institute Communications \textbf{29} (2001), 51--61.

\bibitem{datko1} Datko, R., \textit{Is boundary control a realistic approach to
the stabilization of vibrating elastic systems?}, in: Ferreyra, Guillermo (eds.), ``Evolution Equations'', Marcel Dekker, 133--140 (1994).

\bibitem{datko2} Datko, R. \textit{Two questions concerning the boundary control of elastic systems}, J. Diff. Eq. \textbf{92} (1991), 27--44.
\bibitem{datko3} Datko, R. \textit{Not all feedback stabilized systems
are robust with respect to small time delays in their feedback},
SIAM J. Control and Optimization \textbf{26} (1988), 697--713.
\bibitem{DaLaPo} Datko, R., Lagnese, J., Polis, M. P., \textit{An example on the effect of time delays in
boundary feedback of wave equations}, SIAM J. Control and
Optimization \textbf{24} (1986), 152--156.
\bibitem{da-yo} Datko, R., You, Y.C., \textit{Some second order vibrating systems cannot tolerate small time delays in their damping}, J. Optim. Th. Appl. \textbf{70} (1991), 521--537.

\bibitem{en-na} Engel, K.-J., Nagel R., ``{One-parameter Semigroups for Linear Evolution
Equations}'', Graduate Texts in Mathematics 194, Springer-Verlag,
1999.

\bibitem{gear} Gearhart, L., \textit{Spectral theory for contraction semigroups on Hilbert space}, Trans. Amer. Math. Soc. \textbf{236} (1978), 385--394.

\bibitem{gu-ra-schn} G\"uhring, G.,  R\"abiger, F., Schnaubelt, R.,
\textit{A characteristic equation for non-autonomous partial functional
differential equations}, preprint, 2000.

\bibitem{gyori1} Gy\H ori, I., Pituk, M., \textit{Stability criteria for linear delay differential
equations}, Diff. Int. Eq. \textbf{10} (1997), 841--852.

\bibitem{gyori2} Gy\H ori, I., Hartung, F., Turi, J., \textit{Preservation of stability
in delay equations under delay perturbations}, J. Math. Anal.
Appl. \textbf{220} (1998), 290--313.

\bibitem{hale} Hale, J. K., ``{Functional Differential Equations}'', Appl. Math.
Sci.,  vol.~3, Springer-Verlag, 1971.

\bibitem{ha-lu1} Hale, J. K., Verduyn Lunel, S. M., ``{Introduction to Functional Differential Equations}'',
Appl. Math. Sci. 99, Springer-Verlag, 1993.

\bibitem{ha-lu2} Hale, J. K., Verduyn Lunel, S. M., \textit{Effects of small delays on stability
and control}, in: Bart, Gohberg, Ran (eds), ``Operator Theory and Analysis, The
M. A. Kaashoek Anniversary Volume'', Operator Theory: Advances and Applications,
Vol. 122, Birkh\"auser, 275--301 (2001).

\bibitem{ha-lu3} Hale, J. K., Verduyn Lunel, S. M., \textit{Effects of time
delays on the dynamics of feedback systems}, in: Fiedler, Gr\"oger, Sprekels
(eds.), ``EQUADIFF'99, International Conference on Differential Equations,
Berlin 1999'', World Scientific, 257--266 (2000).

\bibitem{herb} Herbst, I. W., \textit{The spectrum of Hilbert space semigroups}, J. Op. Th. \textbf{10} (1983), 87--94.

\bibitem{ka-lu} Kaashoek, M. A., Verduyn Lunel, S. M., \textit{An integrability
condition on the  resolvent for hyperbolicity of the semigroup},
J. Diff. Eq. \textbf{112} (1994), 374--406.

\bibitem{kappel} Kappel, F., \textit{Semigroups and delay
equations}, in: Brezis, H., Crandall, M. G., Kappel, F. (ed.),
``Semigroups, Theory and Applications'', Vol. II., Pitman Research
Notes in Mathematics \textbf{152}, Longman, 136--176 (1986).

\bibitem{kato} Kato, T.,  ``{Perturbation Theory for Linear Operators}'',
{Grundlehren Math.  Wiss. \textbf{132}}, Springer-Verlag, 1980.

\bibitem{la-sh} Latushkin, Y., Shvydkoy, R.,  \textit{Hyperbolicity of semigroups
and fourier  multipliers}, preprint, 2000.

\bibitem{ma-vo} Maniar, L., Voigt, J., \textit{Linear delay
equations in the $L^p$ context}, Preprint, 2000.
\bibitem{miy} Miyadera, I., {\it On perturbation theory for semi-groups of operators}, T\^ohoku Math. {\bf 18}, 299-310 (1966).

\bibitem{montgomery}Montgomery-Smith, S.,  \textit{Stability and dichotomy of positive
semigroups in  {$L^p$}}, Proc. Am. Math. Soc. \textbf{124} (1996),
2433--2437.

\bibitem{nagel} Nagel, R. (ed.), ``{One-parameter Semigroups of Positive
Operators}'', Springer-Verlag, 1986.

\bibitem{nak} Nakagiri, S., \textit{Optimal control of linear retarded
systems in Banach spaces}, J. Math. Anal. Appl. \textbf{120}
(1986), 169--210.
\bibitem{pruss} Pr\"u\ss, J., \textit{On the spectrum of $C_0$-semigroups}, Trans. Amer. Math. Soc. \textbf{284} (1984), 847--857.

\bibitem{ReTo} Rebarber, R., Townly, S., \textit{Robustness with respect to delays for exponential stability of distributed parameter
systems}, SIAM J. Control and  Optimization \textbf{37} (1998),
230--244.

\bibitem{schnaubelt} Schnaubelt, R., \textit{Parabolic evolution equations with
asymptotically autonomous delay}, preprint, 2001.

\bibitem{voigt1} Voigt, J., {\it On the perturbation theory for strongly continuous semigroups}, Math. Ann. {\bf 229}, 163-171 (1977).
\bibitem{voigt2} Voigt, J., {\it Absorption semigroups, Feller property, and Kato class}, Oper. Theory. Adv. Appl. {\bf 78}, 389-396 (1995).

\bibitem{webb}Webb, G., \textit{Functional differential equations and nonlinear
semigroups in  {$L^p$}-spaces}, J. Diff. Eq. \textbf{29} (1976),
71--89.

\bibitem{wu} Wu, J., ``Theory and Applications of Partial Functional
Differential Equations'', Appl. Math. Sci.
\textbf{119}, Springer-Verlag,  1996.

\bigskip

ELTE TTK, Department of Applied Analysis, Pf. 120, H-1518 Budapest, Hungary \\
{\tt e-mail:} batka@cs.elte.hu

AMS Classification Numbers: 34K05, 34K20, 47D06

\end{document}